\theoremstyle{plain}
\newtheorem{theorem}{Theorem}
\newtheorem{lemma}[theorem]{Lemma}
\newtheorem{proposition}[theorem]{Proposition}
\numberwithin{equation}{section} 
\numberwithin{theorem}{section} 
\newtheorem{remark}{Remark}
\begin{document}

\title
[Boundary Minkowski Problem for Weingarten curvatures]
{The Boundary Value Minkowski Problem for Weingarten curvatures}


\author[F. F. Cruz]{Fl\'avio F. Cruz}
\address{Departamento de Matem\'atica\\ Universidade Regional do Cariri\\ Campus Crajubar \\ 
Juazeiro do Norte, Cear\'a\\ Brazil\\ 63041-141}
\email{flavio.franca@urca.br}

\subjclass[2000]{53C42, 35J60}

\begin{abstract}
In this paper, we prove the existence of hypersurfaces in the Euclidean space with prescribed boundary and 
whose $k$\textsuperscript{th} Weingarten curvature equals a given function that depends on the normal of the hypersurface.
The proof is based on the solvability of a fully nonlinear elliptic PDE.
The required \textit{a priori} estimates are established under the natural assumptions that the prescribed boundary is strictly convex 
and the prescribed function satisfies a Serrin type condition.

\end{abstract}

\maketitle


\section{Introduction}
\label{section1}

Let $\psi$ be a positive function defined in the unit sphere $\mathbb{S}^n$. 
The celebrated Minkowski problem consists of finding a closed convex hypersurface $M$ in 
$\mathbb{R}^{n+1}$ whose Gauss curvature $K_{M}$ is given by
\[
K_{M}=\psi\circ \eta
\]
where $\eta: M \rightarrow \mathbb{S}^n$ is the Gauss map of $M$. The existence and uniqueness of hypersurfaces
defined by a prescribed curvature function in terms of its Gauss map has attracted much attention 
for more than a hundred years. Although
the Minkowski problem being probably the most notable one, the corresponding problem for other important Weingarten curvature
functions such as, for example, the mean and scalar curvatures, has received considerable attention recently.

In the 1950s, A. D. Alexandrov \cite{A2} and S.-s Chern \cite{Ss1, Ss2} 
already raised questions regarding prescribing the Weingarten curvatures in terms of the Gauss map.  
In this direction, B. Guan and P. Guan \cite{GG} proved that if $\psi\in C^{2}(\mathbb{S}^n), \psi>0,$ is invariant under a group of
isometries of $\mathbb{S}^n$ without fixed points, then there exists a closed, strictly convex hypersurface in $\mathbb{R}^{n+1}$ 
whose $k$-th Weingarten curvature is $\psi.$ On the other hand, the boundary value Minkowski problem was proposed 
by Alexandrov \cite{A1}, p. 342, and also by Pogorelov \cite{Pog}, p. 657. 
V. Oliker \cite{O} studied the existence in this setting for the Gauss curvature while  
O. Schn\"urer \cite{Schn} considered, with a different approach, the extension of the problem to a class of curvature functions 
that include multiples and powers of the Gauss curvature. However, the boundary value problem for other Weingartein curvature functions 
has not yet been addressed.
 
In this paper, we consider the problem of finding compact, strictly convex hypersurface in   $\mathbb{R}^{n+1}$ with 
prescribed boundary and  whose Weingarten curvature is prescribed as a function defined on $\mathbb{S}^n$ in
terms of its Gauss map. Let us first recall the definition of the Weingarten curvatures for hypersurfaces. 
Let $S_k(\lambda_1, \ldots, \lambda_n)$ be the
$k$-th order elementary symmetric function normalised so that $S_k(1, \ldots, 1)=1$. For a smooth hypersurface $M$ in
$\mathbb{R}^{n+1}$, let $\kappa=(\kappa_1, \ldots, \kappa_n)$ denote the principal curvatures of $M$. The $k$-th
Weingarten curvature $W_k$ of $M$ is defined as
\begin{equation*}
W_k=S_k(\kappa_1, \ldots, \kappa_n), \quad k=1,\ldots, n.
\end{equation*}
For $k=1,2$ and $n,$ $W_k$ corresponds to the mean, scalar and Gauss curvature, respectively.
We also say a smooth submanifold
$\Sigma\subset  \mathbb{R}^{n+1}$ is strictly convex if through every point of $\Sigma$ there passes a nonsingular
support hyperplane, i.e., a hyperplane with contact of order one with respect to which $\Sigma$ lies strictly on one side.

Our main result may be stated as follows.

\begin{theorem}
\label{teorema}
Let $\psi\in C^{l,1}(\mathbb{S}^n) (l\geq 1)$ be a positive function and $\Sigma$ a smooth, closed,
embedded, strictly convex,  codimension $2$ submanifold in $\mathbb{R}^{n+1}$.
There exists a positive constant $K_0>0$, depending on $\Sigma$, such that if $0<\psi\leq K_0$, then 
$\Sigma$ bounds a $C^{l+2, \alpha}$ embedded convex hypersurface $M$  satisfying
\begin{equation}
\label{principal}
W_k=\psi\circ \eta
\end{equation}
where $\eta: M \rightarrow \mathbb{S}^n$ is the Gauss map of $M$.
\end{theorem}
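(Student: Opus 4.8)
\bigskip
\noindent\textbf{Outline of the proof.}
The plan is to recast \eqref{principal} as a Dirichlet problem for a fully nonlinear elliptic equation and to solve it by the method of continuity, the substantive work being the a priori estimates. Using the strict convexity of $\Sigma$, one first chooses Cartesian coordinates $(x,x_{n+1})\in\mathbb R^{n}\times\mathbb R$ in which $\Sigma$ is graphical over the boundary of a bounded, uniformly convex domain $D\subset\mathbb R^{n}$, say $\Sigma=\{(x,\varphi(x)):x\in\partial D\}$. A convex hypersurface $M$ with $\partial M=\Sigma$ lying over $D$ is then the graph of a convex function $u\in C^{2}(\overline D)$ with $u=\varphi$ on $\partial D$; writing $\nu[u]$ for its unit normal and $\kappa[u]$ for its principal curvatures, equation \eqref{principal} becomes
\[
S_{k}\bigl(\kappa[u]\bigr)=\psi\bigl(\nu[u]\bigr)\ \text{ in }D,\qquad u=\varphi\ \text{ on }\partial D .
\]
On convex functions the principal curvatures lie in the positive cone, hence in the G\aa rding cone $\Gamma_{k}$, so the operator is elliptic there and $S_{k}^{1/k}$ is concave --- precisely the structure needed below. (Alternatively one may parametrise $M$ by its Gauss map and work with the support function $h$, which turns \eqref{principal} into a Monge--Amp\`ere type equation $\det(\nabla^{2}h+h\,g)=\psi^{-1}S_{n-k}(\nabla^{2}h+h\,g)$ with $g$ the round metric; I use the graph formulation.)

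To solve this problem I would deform $\psi$ within the admissible range --- for instance $\psi_{t}=(1-t)\varepsilon_{0}+t\psi$ with $0<\varepsilon_{0}\le K_{0}$, so that every $\psi_{t}\le K_{0}$ --- and connect the given problem to one with a known convex solution, running a continuity argument in the class of convex, graphical solutions; a Leray--Schauder degree argument in the spirit of Guan--Guan \cite{GG} would do as well. Openness follows from the implicit function theorem, the linearisation at a convex solution being a uniformly elliptic operator whose Dirichlet problem is uniquely solvable on $C^{l+2,\alpha}(\overline D)$. Everything then reduces to a priori bounds, uniform in $t$, for $\|u\|_{C^{2,\alpha}(\overline D)}$ over all admissible solutions.

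These estimates are the heart of the argument, and this is where the hypotheses enter. The $C^{0}$ bound confines $M$ between barriers attached to $\Sigma$, and it is exactly here that the Serrin type smallness $\psi\le K_{0}$ is used, the constant $K_{0}=K_{0}(\Sigma)$ being chosen so that an admissible supersolution exists --- were $W_{k}$ too large relative to $\Sigma$, the hypersurface could not remain bounded while spanning $\Sigma$. For the $C^{1}$ bound the interior gradient estimate comes from convexity, while the boundary gradient estimate is built from the support hyperplanes of $\Sigma$, again using $\psi\le K_{0}$; these barriers also keep the solution graphical. The crux is the $C^{2}$ estimate, equivalently a two-sided bound $0<c\le\kappa_{i}\le C$ on the principal curvatures. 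The interior bound follows the Caffarelli--Nirenberg--Spruck and Guan--Guan scheme: differentiate the equation twice, apply the maximum principle to a quantity such as $\max_{i}\kappa_{i}\,e^{\theta(|Du|^{2})}$, and absorb the bad terms by the concavity of $S_{k}^{1/k}$ together with the lower-order bounds --- here the dependence of the right-hand side on the normal $\nu[u]$, hence on $Du$, generates extra first-order curvature terms that must be controlled. The boundary $C^{2}$ estimate, namely bounding the second normal derivative of $u$ along $\partial D$, is the main obstacle: the barrier construction genuinely needs the uniform convexity of $\Sigma$ together with the smallness of $\psi$, and is the analogue of Serrin's classical condition for prescribed-curvature Dirichlet problems.

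Once $0<c\le\kappa_{i}\le C$ is established the equation is uniformly elliptic with concave $S_{k}^{1/k}$, so the Evans--Krylov theorem (and its boundary version) gives an a priori $C^{2,\alpha}$ bound up to $\partial D$, and Schauder theory, using $\psi\in C^{l,1}$, bootstraps the solution to $C^{l+2,\alpha}(\overline D)$ --- which also provides the regularity needed for openness. Closedness follows, the continuity set is all of $[0,1]$, and at $t=1$ one obtains a convex $u\in C^{l+2,\alpha}(\overline D)$ whose graph is the desired hypersurface $M$: it is embedded because convex, with $\partial M=\Sigma$ and $W_{k}=\psi\circ\eta$ by construction. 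I expect the boundary $C^{2}$ estimate, with the interplay between the strict convexity of $\Sigma$ and the Serrin type bound on $\psi$, to be the technically hardest step, and the normal-dependence of the right-hand side in the interior $C^{2}$ estimate a close second.
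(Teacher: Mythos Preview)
Your overall strategy --- reduce to a Dirichlet problem and run continuity/degree with a priori estimates through $C^{2,\alpha}$ --- matches the paper, but two of your concrete choices do not work as stated. First, the vertical-graph setup is unjustified: a smooth, closed, strictly convex codimension-$2$ submanifold $\Sigma\subset\mathbb{R}^{n+1}$ need not project to the boundary of any planar convex domain, so in general there is no hyperplane over which $\Sigma$ is graphical on $\partial D$. The paper instead invokes Ghomi's theorem to place $\Sigma$ on an ovaloid $O$, picks a point interior to the enclosed convex body, and parametrises candidate hypersurfaces as \emph{radial} graphs over a domain $\Omega\subset\mathbb{S}^n$; this choice is what makes the problem a genuine Dirichlet problem for arbitrary $\Sigma$.

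Second, you have the role of the Serrin condition backwards. Taking $K_0=\min_O W_k$ makes the ovaloid cap $M'\subset O$ bounded by $\Sigma$ a \emph{subsolution} $\underline u$ (since $W_k(M')\ge K_0\ge\psi$), not a supersolution. The $C^0$ bound actually comes essentially for free from convexity; the subsolution is instead the engine of the boundary estimates --- it yields $u\ge\underline u$, hence the boundary gradient bound, and it is the key ingredient of the barrier $v=u-\underline u+td-Nd^2$ used for the mixed and double-normal second derivatives on $\partial\Omega$. Relatedly, your deformation $\psi_t=(1-t)\varepsilon_0+t\psi$ has no evident starting solution at $t=0$ and no mechanism keeping solutions on the correct side of $\underline u$ along the path; the paper builds both in by inserting a factor $e^{2(v-\underline v)}$ into the right-hand side so that $\underline v$ itself is the unique solution at $t=0$ and the comparison principle forces $v^t>\underline v$ throughout, then finishes with a degree argument. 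One further remark: the normal-dependence you flag as a difficulty in the interior $C^2$ estimate disappears if you pass to the support-function parametrisation on $\eta(M)\subset\mathbb{S}^n$, where the right-hand side becomes $\psi^{-1/k}(x)$, a function of the base point only, and the Guan--Guan maximum-principle argument applies verbatim.
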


In general, uniqueness does not hold for Theorem \ref{teorema}. 
However, under stronger assumptions on $\Sigma$, Al\' ias, de Lira and Malacarne 
\cite{ALM} established some rigidity results for $W_k=\psi$ constant and  $k\geq 2$. 
We point out that it follows from a result of M. Ghomi \cite{Ghomi} that the assumption on $\Sigma$ 
is equivalent to ask the existence of a suitable subsolution to the problem.
A specific value for the constant $K_0$ may be obtained by taking the highest value among the 
minimum of the $k$-th Weingarten curvature of the ovaloids 
(i.e. closed hypersurface of positive Gauss curvature) that contain $\Sigma$.

An outline of the proof of Theorem \ref{teorema} is as follows: we reduce the proof of Theorem \ref{teorema} to the existence
of solutions for a Dirichlet problem associated to a fully nonlinear elliptic equation over a region $\Omega\subset\mathbb{S}^n.$
The existence of solution is proved by applying the method of continuity and a degree 
theory argument once the \textit{a priori} estimates for the solutions have been established. The assumption on the geometry of $\Sigma$
and a result of M. Ghomi \cite{Ghomi}  allow us to obtain a subsolution
satisfying the boundary condition, which is crucial for the establishment of the \textit{a priori} boundary estimates.
We remark that the boundary estimates obtained here without imposing structure conditions on $\psi$
besides positivity and under the restriction of $W_k$ to the positive cone seem to be new.

The article is organized as follows.  In Section \ref{section2} we list some basic formulae
which are needed later and describe an appropriate analytical formulation for the problem.
In Section \ref{section3} we deal with the {\it a priori} estimates 
for prospective solutions.
Finally in Section \ref{section4} we complete the proof of Theorem \ref{teorema}
using the continuity method and a degree theory argument with the aid of the previously established  estimates.


\section{Preliminaries}
\label{section2}

We continue to use the notations introduced in Section 1 and consider a smooth, closed,
embedded, strictly convex,  codimension $2$ submanifold $\Sigma$ in $\mathbb{R}^{n+1}.$
M. Ghomi \cite{Ghomi}  proved that, under these conditions, $\Sigma$ lies in an ovaloid $O$. 
The Serrin type condition we have to impose on $\psi$ is then
\begin{equation}
\label{subsolution1}
0< \psi\leq K_0:= \min_{p\in O} W_k(p).
\end{equation}
In fact, in all the proofs instead of \eqref{subsolution1} the strict inequality is assumed to hold, but since 
the estimates in section \ref{section3}, as well as the other proofs, do not depend on a 
quantitative bound for the difference  $K_0-\psi$ it is not difficult to see that the general case follows by approximation.
\begin{remark}
For a precise value of the constant $K_0$ given in Theorem \ref{teorema} consider the set $X$ consisting of all ovaloids that contains 
$\Sigma$ and denote
\begin{align*}
K_O=\min_{p\in O} W_k(p), \quad O\in X,
\end{align*}
where $W_k(p)$ denotes the $k$-th Weingarten curvature of $O$ at $p$. Thus Theorem \ref{teorema} holds for $K_0=\max_{O\in X}K_O$.
\end{remark}

Let $B$ be the convex body whose boundary is $O=\partial B$. 
It follows from the Jordan-Brouwer's separation theorem that
$\Sigma$ bounds a connected region $M^{\prime}$ 
in $O.$ Let $E\subset O$ be a neighborhood of $M^\prime$ such that $\overline E\neq O$ and $p_0$ an interior point of
$O\setminus \overline E$. For any smooth strictly convex hypersurface $M\subset B$ 
with $\partial M=\Sigma$ we consider the set $O_M=\overline M\cup (O\setminus M^\prime)$,
which is the boundary of a convex body $B_M$. 

Moving $p_0$ in the direction of the inward normal of $O_M$ we can find a point $p_1$ and $\delta>0$ such that 
\begin{equation}
\label{Czerobounds}
B_\delta(p_1)\subset \textrm{int } B_M.
\end{equation}
Hence, every such hypersurface $M$ may be written 
as a graph over a small sphere $\partial B_\delta(p_1).$ In particular, $M^{\prime}$ may be represented as a radial graph 
$\bar X(x)=\bar{\rho}(x) x, \, x\in\Omega$, over a region $\overline\Omega\subset \partial B_\delta(p_1).$ Therefore, in order to
find a solution $M$ of \eqref{principal}, it is sufficient to solve a Dirichlet problem associated to a fully nonlinear 
second order elliptic equation defined over $\Omega$. Notice that \eqref{subsolution1} implies that
$\bar \rho$ is a subsolution to this problem. In the sequel we assume, w.l.o.g., that 
$p_1$ is the origin and $\delta=1$, i.e. $\Omega\subset \mathbb{S}^n$. 

Let $M$ be a smooth radial graph given by $X(x)=\rho(x)x,$ where $\rho$ is a smooth
function defined in a domain $\Omega\subset\mathbb{S}^n$ and $e_1, \ldots, e_n$ be a smooth
local frame field on $\mathbb{S}^n$.  Let $\sigma_{ij}=\langle e_i, e_j \rangle$ denote the metric on $\mathbb{S}^n$
and let $\sigma^{ij}$ denote its inverse.
Setting $u=1/\rho$, then the metric, the unit normal and second fundamental form of $M$ are given, respectively, by
\begin{align}
\begin{split}
\label{metric}
& g_{ij}=\frac{1}{u^2}(\sigma_{ij}+\frac{1}{u^2}\nabla_iu\nabla_j u) \\
& \eta=-\frac{1}{(u^2+|\nabla u|^2)^{1/2}}(\nabla u-u x)\\
& h_{ij}=\frac{1}{u\sqrt{u^2+|\nabla u|^2}}(u\sigma_{ij}+\nabla_{ij}u)
\end{split}
\end{align}
where $\nabla$ denotes the covariant differentiation on $\mathbb{S}^n,$  
$\nabla\rho=\textrm{grad} \rho$ is the gradient of $\rho$ and $\nabla_{ij}=\nabla_i\nabla_j.$ 
It is well known that (e.g., see \cite{CNSIV})
the principal curvatures of $M$ are the
eigenvalues of the symmetric matrix $A[u]=[a_{ij}]=[\gamma^{ik}h_{kl}\gamma^{jl}],$ 
where $[\gamma^{ij}]$ and its inverse matrix $[\gamma_{ij}]$ are given, respectively, by
\begin{equation}
\label{raiz-gij}
\gamma^{ij}= u\sigma_{ij}-u\frac{\nabla_i u\nabla_j u}{w(u+w)}
\end{equation}
and
\begin{equation}
\label{raiz-gij2}
\gamma_{ij}= \frac{1}{u}\sigma_{ij}+\frac{\nabla_i u\nabla_j u}{u^2(u+w)}.
\end{equation}

Let $\mathcal{K}$ be the set of $n\times n$ positive definite symmetric matrices and
\begin{equation*}
\Gamma^+=\{\lambda=(\lambda_1, \ldots, \lambda_n)\in\mathbb{R}^n \, : \, \lambda_i>0\}.
\end{equation*}
For each $A\in\mathcal{K}$, let $\lambda(A)=(\lambda_1, \cdots, \lambda_n)$ denote the eigenvalues of $A.$ 
We define
\begin{equation}
\label{defincaoF}
F(A)=W_k^{1/k}\big(\lambda(A)\big),\quad A\in \mathcal{K}.
\end{equation}
Consequently, if the radial graph $M$ is a convex solution of \eqref{principal} then the function $u$ defined above is such that
$u=\underline u=1/\bar{\rho}$ on $\partial\Omega$ and satisfies the following
partial differential equation
\begin{equation}
\label{equation}
G(\nabla^2 u,\nabla u, u)=\Psi(\nabla u, u, x), \quad  x\in\Omega\subset\mathbb{S}^n,
\end{equation}
where
\begin{align*}
G(\nabla^2 u,\nabla u, u) = F\big( A[u]\big) \quad\text{and}\quad
\Psi(\nabla u, u, x) =\psi^{1/k}(\eta).
\end{align*}

Hence, we call a positive function $u\in C^2(\Omega)$ 
{\it admissible} if $u\sigma+\nabla^2u$ is positive definite, where $\sigma$ denotes the standard metric of $\mathbb{S}^n$. 
If $u$ is an admissible solution of \eqref{equation}
and $u=1/\bar\rho$ on $\partial\Omega$,
we can recover a strictly convex hypersurface $M$ that solves \eqref{principal} by $X(x)=\left(1/u(x)\right)x$, 
$x\in\Omega$. Therefore, solving problem \eqref{principal} is equivalent to finding an admissible solution of
the Dirichlet problem
 \begin{align}
\begin{split}
\label{equationDP}
 G(\nabla^2 u,\nabla u, u)&=\Psi(\nabla u, u, x) \quad\textrm{in }\, \Omega\\
u&=\varphi \quad\textrm{on }\, \partial\Omega
\end{split}
\end{align}
where $\varphi=1/\bar{\rho}_{ |_{\partial\Omega}}$. By \eqref{subsolution1}, the function $\underline{u}=1/\bar\rho$ is 
a subsolution of equation \eqref{equationDP}, i.e.,
\begin{align}
\begin{split}
\label{subsolution2}
\underline{\psi}(x):= G(\nabla^2 \underline  u,\nabla \underline u, \underline  u) &\geq  \Psi(\nabla \underline u, \underline u, x) 
\quad\textrm{in }\, \Omega\\
\underline u&=\varphi \quad\textrm{on }\, \partial\Omega.
\end{split}
\end{align}

We now proceed to derive a priori estimates for admissible solutions of \eqref{equationDP}.
As we shall work on two auxiliary forms of equation \eqref{equationDP} in sections \ref{section3} 
and \ref{section4}, we are going to represent \eqref{equationDP} generically by
\begin{align}
\begin{split}
\label{equation4}
G(\nabla^2 u,\nabla u, u)&=\Upsilon (\nabla u, u, x) \quad\textrm{in }\, \Omega\\
u&=\varphi \quad\textrm{on }\, \partial\Omega
\end{split}
\end{align}
for a function $\Upsilon$ defined in terms of $\psi$ (see equations \eqref{psi-t} and \eqref{phi-t} below).


\section{A priori estimates}
\label{section3}

In this section we derive the {\it a priori} estimate
\begin{equation}
\label{C2alphaestimate}
\| u\|_{C^{2,\alpha}(\Omega}\leq C
\end{equation}
for admissible solutions $u$  of \eqref{equationDP} satisfying $u\geq \underline{u}.$ 
Notice that the $C^0$ bounds follows from the geometric setting \eqref{Czerobounds} and the convexity of $M$.
In order to derive the $C^1$ bounds on the boundary, we observe that any admissible solution $u$ satisfies
$\Delta_s u+nu>0$, where $\Delta_s$ is the Laplace-Beltrami operator on $\mathbb{S}^n$.
Let $\overline u$ be the solution of
 \begin{align}
\begin{split}
\label{eqsupersolution}
\Delta_s\overline u+n L&=0 \quad\textrm{in }\, \Omega\\
\overline u&=\varphi \quad\textrm{on }\, \partial\Omega
\end{split}
\end{align}
where $L>0$ is a uniform constant satisfying $|u|\leq L\textrm{ in }\Omega.$ 
So, we have $\underline u\leq u\leq \overline u$ on $\Omega$ and, as the tangencial derivatives of $u$ on $\partial \Omega$
is known, it follows that
\begin{equation}
\label{Cumbordo}
|\nabla u|\leq C\quad \textrm{on } \partial\Omega.
\end{equation}

Now we proceed to the estimate of $|\nabla u|$ in the interior of $\Omega$. Consider the function
\begin{equation*}
w=(u^2+|\nabla u|^2)^{1/2}
\end{equation*}
and let $x_0\in\overline\Omega$ a point where $w$ attains its maximum. If  $x_0\in\partial\Omega$ the estimate
follows from \eqref{Cumbordo}. If $x_0\in\Omega$, we have
\begin{align*}
w\nabla_iw(x_0)=(u\sigma_{ij}+\nabla_{ij}u)\sigma^{jk}\nabla_k u=0
\end{align*}
for all $i=1,\ldots, n$. Since $u\sigma+\nabla^2u$ is positive definite, it follows that $\nabla u(x_0)=0$ and we get
$\max_{\overline\Omega} |\nabla u|\leq  w(x_0)=|u(x_0)|\leq L$. Thus, we have the uniform gradient estimate
\begin{align}
\label{Cuminterior}
|\nabla u| \leq C \quad \textrm{in } \Omega.
\end{align}

Next, we shall establish the second derivatives estimates. Let us assume we have a bound
on the boundary
\begin{equation}
\label{Cdoisbordo}
|\nabla^2u|\leq C \quad \textrm{on } \partial\Omega.
\end{equation}
To establish the second derivatives estimate in $\Omega$, we follow the approach used in 
the proof of Proposition 2.1 in \cite{GG} and we include the computations
here just for the convenience of the reader. First, we observe that as $M$ is strictly convex and 
$O_M=\overline M\cup (O\setminus M^\prime)$ is the boundary of a convex
body $B_M$, the Gauss map $\eta: M\rightarrow \mathbb{S}^n$ is a diffeomorphism from $M$ to $\eta(M)\subset\mathbb{S}^n$. 
Consider the supporting function $u$ (the ambiguous use of $u$ should not cause any difficulties) given by
\begin{align}
\label{funcaosuporte}
u(x)=\langle x, \eta^{-1}(x)\rangle, \quad x\in \eta(M)\subset\mathbb{S}^n.
\end{align}
It is well known (e.g., see \cite{GG}) that the original hypersurface $M$ can be recovered from the support function $u$ and
the eigenvalues of $(\nabla^2 u+ u\sigma)$ at $x\in\eta(M)$ with respect to the standard metric $\sigma$
of $\mathbb{S}^n$ are the inverses of the principal curvatures of $M$ at $\eta^{-1}(x)$. Thus, as
\begin{equation}
S_k(\kappa_1,\ldots,\kappa_n)=[S_{n,n-k}(\kappa_1^{-1}, \ldots,\kappa_n^{-1})]^{-1}
\end{equation}
where $S_{n,k}=S_n/S_k$ for $0\leq k\leq n$, defining $\tilde F(A)=\big (S_{n, n-k}(\lambda(A)\big)^{1/k}$,
we can rewrite \eqref{equation4} by
\begin{equation}
\label{Equationsupport}
\tilde{F}(\nabla^2u+u\sigma)=\tilde\psi\quad \textrm{in } \,\, \eta(M)\subset\mathbb{S}^n
\end{equation}
where $\tilde\psi=\psi^{-1/k}$.  Now, consider the function
\begin{align*}
H=\textrm{trace} (\nabla^2u+u\sigma)=\Delta u+nu
\end{align*}
and assume that $H$ attains its maximum at an interior point $\eta^{-1}(x_0)$  of $M$. Choose an orthonormal
local frame $e_1, \ldots, e_n$ of $\mathbb{S}^n$ about $x_0$ such that $\nabla_{ij}u(x_0)$ is diagonal. We denote
\begin{align*}
v_{ij}=\nabla_{ij}u+u\delta_{ij}\quad \textrm{and}\quad \tilde{F}^{ij}=\frac{\partial \tilde F}{\partial v_{ij}}\big(\{v_{ij}\}\big).
\end{align*}
Differentiating $H$ with respect to the standard metric on $\mathbb{S}^n$, we get
\begin{equation}
\label{contaH1}
\nabla_{ii}H=\Delta (v_{ii})-nv_{ii}+H.
\end{equation}
As $M$ is strictly convex, the matrix $\{v_{ij}\}$ is positive definite and hence so is $\{\tilde F^{ij}\}$. Thus,
since $\{H_{ij}\}$ is negative semidefinite at $x_0$, it follows that
\begin{align}
\label{contaH2}
\tilde F^{ii}H_{ii}=\tilde F^{ii}\Delta(v_{ii})-n\tilde F^{ii}v_{ii}+H \sum_i \tilde F^{ii} \leq 0.
\end{align}
By differentiating equation \eqref{Equationsupport} and using that $\tilde F$ is concave, we obtain
\begin{align}
\label{contaH3}
\tilde F^{ii}\Delta(w_{ii})\geq \Delta \tilde\psi.
\end{align}
So, by using the homogeneity of $\tilde{F}$ and the inequality $\sum_i F^{ii}\geq 1$, we can deduce from \eqref{contaH2} that
\begin{align*}
\Delta \tilde \psi -n\tilde\psi+H\leq 0,
\end{align*}
which implies that $H\leq C$ for a uniform constant $C$. Then, an upper bound for $\nabla^2u$ follows from  
the $C^0$ estimates. To establish a lower bound, we first use the Newton-Maclaurin inequality to get
\begin{align}
\label{NewtonMacinequality}
\begin{split}
S_{n}(\kappa_1^{-1}, \ldots,\kappa_n^{-1}) & =\tilde\psi^k S_{n-k}(\kappa_1^{-1}, \ldots,\kappa_n^{-1})
\\& \geq \tilde\psi^k S_{n}(\kappa_1^{-1}, \ldots,\kappa_n^{-1})^{\frac{n-k}{n}}.
\end{split}
\end{align}
Therefore
\begin{align*}
S_{n}(\kappa_1^{-1}, \ldots,\kappa_n^{-1})\geq c_0\tilde\psi^n
\end{align*}
for some uniform constant $c_0>0$. The lower bound for $\nabla^2 u$, then, follows from the upper bound for the eigenvalues
of $\nabla^2u+u\sigma$ and we obtain the estimate
\begin{equation}
\label{Cdoisinterior}
|\nabla^2u|\leq C \quad \textrm{in } \Omega.
\end{equation}
We notice that, if $H$ attains its maximum at a point $x_0\in\partial\Omega$ then the desired estimate \eqref{Cdoisinterior} 
follows from assumption \eqref{Cdoisbordo}.

Now, we shall to establish the second derivatives estimates on the boundary \eqref{Cdoisbordo}. In what 
follows, we (return to) denote $u=1/\rho$ where $\rho$ is defined by 
the radial parametrization of $M$: $X(x)=\rho(x)x,\, x\in\Omega$.
Let  $x_0\in\partial\Omega,$ an arbitrary fixed point and choose a local orthonormal frame 
field $e_1, \ldots, e_n$  on $\mathbb{S}^n$ around $x_0,$ where $e_n$ is the parallel translation of 
the unit normal vector field on $\partial\Omega.$
From the equality $u=\underline{u}$ on $\partial\Omega$, we get
\begin{equation}
\label{U-2-bordo}
\nabla_{ij}(u-\underline{u})=-\nabla_{n}(u-\underline{u})\Pi_{ij}\quad\textrm{for any } i,j<n,
\end{equation} 
where $\Pi_{ij}=\langle \nabla_{e_{i}} e_j, e_n\rangle$ is the second fundamental form of
$\partial\Omega.$ It follows that
\begin{equation}
\label{EST-TANG}
|\nabla_{ij} u|\leq C, \quad i,j<n,
\end{equation}
for a uniform constant $C$. Now, we turn our attention to the estimate of the mixed tangential-normal and
double normal second derivatives on the boundary. Consider the linearized operator
\begin{align}
\label{Loperator}
L=G^{ij}\nabla_{ij}+\tilde{G}^{i}\nabla_i
\end{align}
where
\begin{align*}
G^{ij}=\frac{\partial G}{\partial\nabla_{ij}u}\quad\textrm{and}\quad \tilde G^i=\frac{\partial G}{\partial\nabla_i u}
+g^{ik}\sigma\left( \nabla \psi(N), \frac{1}{u^2w}e_k-\frac{\nabla_lu}{u^3w}x\right)
\end{align*}
and $g^{ij}$ is the inverse of the metric of $M$ given in \eqref{metric}. Differentiating equation 
\eqref{equation4}, we get
\begin{align}
\label{contaLalpha}
\begin{split}
L\big(\nabla_\alpha u \big) &=  G^{ij}\nabla_{ij\alpha} u+\tilde G^i\nabla_{i\alpha}u+G_u\nabla_\alpha u \\
& =  G^{ij}(\nabla_{\alpha ij}u+\delta_{ij}\nabla_\alpha u-\delta_{\alpha j}\nabla_i u)+\tilde G^i\nabla_{i\alpha}u+G_u\nabla_\alpha u
\\ & =  - G_u \nabla_\alpha u-g^{ij}\sigma_{i\alpha}\sigma\left( \nabla \psi(N), \frac{1}{uw}e_j-\frac{\nabla_ju}{u^2w}x\right)
\\& \quad +\nabla_\alpha u\sum G^{ii}-G^{i\alpha}\nabla_i u
\end{split}
\end{align}
where we denote $G_u=\frac{\partial G}{\partial u}$ and make use of the formula for commuting the order of derivatives on 
$\mathbb{S}^n$. By a direct computation and the previous established estimates, we have
(see, e. g. \cite{Cruz})
\begin{align}
\label{EstimateforG}
\sum |G^i| & \leq C\\
|G_u|  \leq C\big( 1 & +\sum G^{ii}\big)
\end{align}
for a uniforme constante $C$ depending on $\Omega$ and $\underline{u}$. Thus,
\begin{equation}
\label{Lestimate1}
|L\big(\nabla_\alpha(u-\underline u)\big)|\leq C\big(1+\sum G^{ii}\big)
\end{equation}
for a uniform constant $C$. In order the introduce the barrier function, we need first to settle some notation. 
Let $\varrho(x)$ denote the distance from $x\in\Omega$ to $x_0,$ 
$\varrho(x)=\textrm{dist}_{\mathbb{S}^n}(x,x_0),$ and set
\begin{equation*}
\Omega_{\delta}=\{x\in\Omega\, :\, \varrho(x)<\delta\}.
\end{equation*}
Since $\nabla_{ij}\varrho^2(x_0)= 2\delta_{ij}$, by choosing $\delta>0$ sufficiently small, we 
can assume that $\varrho$ is smooth in $\Omega_{\delta},$
\begin{equation}
\label{RHO-2}
\sigma_{ij}\leq \nabla_{ij}\varrho^2\leq 3\sigma_{ij} \quad\textrm{in}\,\, \Omega_\delta,
\end{equation}
and the distance function $d(x)=\textrm{dist}_{\mathbb{S}^n}(x,\partial\Omega)$ to the boundary 
$\partial\Omega$ is smooth in $\Omega_\delta.$ 
A slightly different version of this Lemma could be found in \cite{Cruz}. We include its proof here for the convenience of the reader.

\begin{lemma}
\label{DEF-V}
There exist some uniform positive constants $t, \delta , \varepsilon$ sufficiently small and 
$N$ sufficiently large depending on $\underline u$ and other known data, such that the function
\begin{equation}
\label{V}
v=u-\underline{u} +td-Nd^2
\end{equation}
satisfies
\begin{equation}
\label{Lestimate2}
Lv\leq -(1+\varepsilon G^{ij}\delta_{ij}) \quad \textrm{in }  \Omega_\delta
\end{equation}
and
\begin{equation}
\label{theta-bordo}
v\geq 0 \quad\textrm{on } \partial\Omega_\delta.
\end{equation}
\end{lemma}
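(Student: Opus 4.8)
The plan is to use $v = u - \underline u + td - Nd^2$ as a barrier and to choose the constants $N, t, \delta, \varepsilon$ in that order (with some interdependence) so that the differential inequality \eqref{Lestimate2} holds. First I would compute $Lv$ term by term. Since $u$ solves \eqref{equation4} and $\underline u$ is a subsolution \eqref{subsolution2}, the operator $L$ applied to $u - \underline u$ produces a controlled term: by concavity of $G$ in its Hessian argument (equivalently, concavity of $F = W_k^{1/k}$ on the positive cone) together with the mean value theorem, one gets $L(u-\underline u) \le \Upsilon(\nabla u, u, x) - \underline\psi(x) + (\text{lower order}) \le C$, using the lower-order estimates \eqref{EstimateforG} and the already-established $C^1$ bounds. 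The subtle point is that $L$ here is linearized at $u$, not at an intermediate function, so one should phrase this as: $G^{ij}\nabla_{ij}(u-\underline u) \le G^{ij}\nabla_{ij}u - G^{ij}\nabla_{ij}\underline u \le G(\nabla^2 u,\dots) - G(\nabla^2\underline u,\dots) + (\text{first-order}) \le \Upsilon - \underline\psi + C \le C$ by concavity, and then absorb the $\tilde G^i \nabla_i(u-\underline u)$ term using \eqref{EstimateforG} and $|\nabla u|, |\nabla\underline u| \le C$.

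Next I would handle the terms $L(td) = t\,G^{ij}\nabla_{ij}d + t\,\tilde G^i\nabla_i d$ and $L(-Nd^2) = -N\,G^{ij}\nabla_{ij}(d^2) - N\,\tilde G^i\nabla_i(d^2)$. Near $\partial\Omega$ the distance function satisfies $\nabla_{ij}(d^2) = 2\nabla_i d\,\nabla_j d + 2d\,\nabla_{ij}d$; since $|\nabla d| = 1$ on $\partial\Omega$, one has $\nabla_{ij}(d^2) \ge \sigma_{ij}$ in $\Omega_\delta$ for $\delta$ small (the analogue of \eqref{RHO-2}), so $-N\,G^{ij}\nabla_{ij}(d^2) \le -N\,G^{ij}\sigma_{ij} = -N\sum G^{ii}$. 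This is the crucial negative term: it lets us dominate both the $C(1+\sum G^{ii})$ coming from $L(u-\underline u)$ and the $t\,G^{ij}\nabla_{ij}d$ term (which is $O(t\sum G^{ii})$ since $\nabla_{ij}d$ is bounded in $\Omega_\delta$). The first-order contributions $t\,\tilde G^i\nabla_i d$ and $N\,\tilde G^i\nabla_i(d^2)$ are bounded by $C(t+N\delta)$ using \eqref{EstimateforG}, hence harmless for $\delta$ small. Collecting terms:
\begin{align*}
Lv \le C + Ct + \big(C + Ct - 2N\big)\sum G^{ii} + C(t + N\delta).
\end{align*}
Now choose $N$ large so that $2N - C - Ct \ge 1 + \varepsilon$ for a small fixed $\varepsilon$ (say $\varepsilon = 1$, or any value $\le$ this), then shrink $t$ and $\delta$ so that $C + Ct + C(t+N\delta) \le 1$; this yields $Lv \le -(1 + \varepsilon\sum G^{ii}) = -(1 + \varepsilon\,G^{ij}\delta_{ij})$ as claimed. (One small care: we need $\sum G^{ii} \ge$ some positive constant to guarantee the $\varepsilon$ term is genuinely present, but since $\{G^{ij}\}$ is positive definite and, by the homogeneity/normalization of $F$ on the bounded set where the eigenvalues of $A[u]$ live, $\sum G^{ii}$ is bounded below — in any case the inequality $Lv \le -1 - \varepsilon\sum G^{ii} \le -1$ suffices for the applications.)

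Finally, the boundary condition \eqref{theta-bordo}: the boundary $\partial\Omega_\delta$ splits into the portion on $\partial\Omega$, where $d = 0$ and $u = \underline u = \varphi$, so $v = 0$; and the portion on the sphere $\varrho(x) = \delta$ inside $\Omega$, where $d(x) \ge c\delta$ for some $c > 0$ (points at inner distance $\delta$ from $x_0$ along $\partial\Omega$ are at distance comparable to $\delta$ from the boundary — more carefully, one uses that on $\{\varrho = \delta\} \cap \Omega$ either $d$ is already bounded below by a constant times $\delta$, or one restricts attention to $\Omega_\delta \cap \{d < t/(2N)\}$ so that $td - Nd^2 \ge td/2 \ge 0$ and simultaneously $u - \underline u \ge 0$ by hypothesis). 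Since $u \ge \underline u$ everywhere and $td - Nd^2 = d(t - Nd) \ge 0$ whenever $d \le t/N$, we get $v \ge 0$ on all of $\partial\Omega_\delta$ provided $\delta$ is taken small enough that $\Omega_\delta \subset \{d \le t/N\}$, which is automatic once $\delta \le t/N$. I expect the main obstacle to be bookkeeping the order of quantifiers — the constant $C$ in the first-order and concavity estimates depends only on $\underline u$, $\Omega$, $\psi$ and the already-established $C^1$ bounds, so it is fixed \emph{before} $N, t, \delta, \varepsilon$ are chosen; once one is careful about this, the choice $N$ large, then $t, \delta$ small, then $\varepsilon$ small closes the argument.
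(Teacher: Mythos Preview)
Your argument has a genuine gap at the step where you handle the $-Nd^2$ term. You assert that ``since $|\nabla d|=1$ on $\partial\Omega$, one has $\nabla_{ij}(d^2)\ge\sigma_{ij}$ in $\Omega_\delta$ for $\delta$ small (the analogue of \eqref{RHO-2})''. This is false: $\varrho$ in \eqref{RHO-2} is the distance to a \emph{point}, for which $\nabla^2(\varrho^2)(x_0)=2\sigma$; by contrast $d$ is the distance to the \emph{hypersurface} $\partial\Omega$, and on $\partial\Omega$ one has $\nabla_{ij}(d^2)=2\nabla_i d\,\nabla_j d$, a rank-one matrix. So $\nabla^2(d^2)$ is strictly degenerate in all tangential directions, and the inequality $\nabla^2(d^2)\ge\sigma$ cannot hold in $\Omega_\delta$ no matter how small $\delta$ is. Consequently $-N\,G^{ij}\nabla_{ij}(d^2)$ only produces a term of size $-2N\,G^{ij}\nabla_i d\,\nabla_j d$ (essentially $-2N\,G^{nn}$), not $-N\sum G^{ii}$, and this does not dominate the full trace $\sum G^{ii}$ that appears on the right of \eqref{Lestimate2}.

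The paper overcomes exactly this difficulty by a different use of concavity. Rather than estimating $G^{ij}\nabla_{ij}(u-\underline u)$ and $-N\,G^{ij}\nabla_{ij}(d^2)$ separately, it applies the concavity inequality $F(B)-F(A)\le F^{ij}(A)(B_{ij}-A_{ij})$ with $A=A[u]$ and $B$ built from the \emph{combined} matrix $\underline u\,\sigma+\nabla^2\underline u+2N\,\nabla d\otimes\nabla d-3\varepsilon\sigma$. The $-\varepsilon\sum G^{ii}$ term comes from the uniform strict convexity $\underline u\,\sigma+\nabla^2\underline u\ge 4\varepsilon\sigma$ of the subsolution (one peels off $2\varepsilon\sigma$ before applying concavity). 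The large negative constant (the ``$-1$'') then comes from $-F(B)$: since the rank-one perturbation $2N\,\nabla d\otimes\nabla d$ pushes a single eigenvalue to $\lambda_0+N\mu_0$ while the others stay $\ge\lambda_0>0$, one has $W_k^{1/k}(\lambda_0,\ldots,\lambda_0,\lambda_0+N\mu_0)\to+\infty$ as $N\to\infty$. This is the mechanism that your separate treatment misses; the rank-one nature of $\nabla d\otimes\nabla d$ is harmless inside $F$ (which is monotone in each eigenvalue) but fatal if you try to compare it to $\sigma$ directly.
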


\begin{proof}
Let $\varepsilon>0$ be such that
\begin{equation}
\label{convex1}
\underline{u} \sigma+\nabla^2\underline{u}\geq 4\varepsilon \sigma \quad \textrm{in } \,\Omega_{\delta}.
\end{equation}
Since $|\nabla d|=1$ and $-C\sigma\leq \nabla^2d\leq C \sigma,$ in $\Omega_\delta$,
for a constant $C$ depending only on the geometry of $\Omega,$ we have 
\begin{equation*}
G^{ij}\nabla_{ij} d\leq C G^{ij}\delta_{ij}
\end{equation*}
 and
\begin{align*}
\begin{split}
\lambda(\underline{u}\sigma+\nabla^2\underline{u}+N \nabla^2d^2-2\varepsilon \sigma) 
\geq \lambda(\underline{u}\sigma+\nabla^2\underline{u}+2N \nabla d\otimes\nabla d-3\varepsilon \sigma)
\end{split}
\end{align*}
in $\Omega_\delta,$ for any $\delta$ sufficiently small.
Using the concavity of $W_k^{1/k}$ we get
\begin{align*}
 F\Big( & \big\{\frac{1}{uw}\gamma^{ik}(\underline{u}\sigma_{kl}
+\nabla_{kl}\underline{u}+2N\nabla_l d\nabla_k d
-3\varepsilon \sigma_{lk})\gamma^{jl}\big\}\Big)-\Upsilon (\nabla u, u, x)\\
& \leq G^{ij}\Big(\nabla_{ij}\underline{u}+\underline{u}\sigma_{ij} 
+N\nabla_{ij}d^2-2\varepsilon\sigma_{ij}-(u\sigma_{ij}+\nabla_{ij}u)\Big) 
\\ & = G^{ij} \nabla_{ij}(\underline{u}-u+Nd^2)+(\underline{u}-u)G^{ij}\sigma_{ij}
-2\varepsilon G^{ij}\sigma_{ij}.
\end{align*}
Then
\begin{align*}
\begin{split}
G^{ij}&\nabla_{ij}(u-\underline{u}+td-Nd^2)\leq \Upsilon (\nabla u, u, x)-2\varepsilon G^{ij}\sigma_{ij}
+tCG^{ij}\sigma_{ij}\\ &- F\Big( \big\{\frac{1}{uw}\gamma^{ik}
(\underline{u}\sigma_{kl}+\nabla_{kl}\underline{u}+2N\nabla_l d\nabla_k d
-3\varepsilon \sigma_{lk})\gamma^{jl}\big\}\Big) \\
 = & - W_k^{1/k}\left( \lambda\Big(\{\frac{1}{uw}\gamma^{ik}
(\underline{u}\sigma_{kl}+\nabla_{kl}\underline{u}-3\varepsilon \sigma_{kl})\gamma^{jl}
+\frac{2N}{uw}\gamma^{ik}\nabla_l d\nabla_k d\gamma^{jl}\}\Big)\right)\\
&+(tC-2\varepsilon)G^{ij}\sigma_{ij}+\Upsilon (\nabla u, u, x)
\end{split}
\end{align*}
for a uniform constant $C$. By the choice of $\varepsilon$ and the previous established 
$C^0$ estimate, there exists a uniform positive constant $\lambda_0$ satisfying
\begin{equation}
\label{convex4}
\left\{\frac{1}{uw}\gamma^{ik}(\underline{u}\delta_{kl}
+\nabla_{kl}\underline{u}-3\varepsilon \delta_{lk})\gamma^{jl}\right\}\geq \lambda_0 \sigma.
\end{equation}
Then, we can find a uniform positive constant $\mu_0$ such that 
\begin{align*}
P^T\big\{\frac{1}{uw}\gamma^{ik}
(\underline{u}\delta_{kl}+&\nabla_{kl}\underline{u}-3\beta \delta_{kl})\gamma^{jl}
+\frac{2N}{uw}\gamma^{ik}\nabla_l d\nabla_k d\gamma^{jl}\big\}P \\ & \geq  
\textrm{diag}\{\lambda_0,\lambda_0, \ldots, \lambda_0+N\mu_0\},
\end{align*}
where $P$ is an orthogonal matrix that diagonalizes
$\big\{\gamma^{ik}\nabla_l d\nabla_k d\gamma^{jl}\big\}.$
Then, by the ellipticity and concavity of $W_k^{1/k}$ in the positive cone $\Gamma^+$, we get
\begin{align*}
\begin{split}
& W_k^{1/k}\left( \lambda\big(\{\frac{1}{uw}\gamma^{ik}
(\underline{u}\delta_{kl}+\nabla_{kl}\underline{u}-3\beta \delta_{lk})\gamma^{jl}
+\frac{2N}{uw}\gamma^{ik}\nabla_l d\nabla_k d\gamma^{jl}\}\big)\right)\\
&=W_k^{1/k}\left( \lambda\big(P^T\{\frac{1}{uw}\gamma^{ik}(\underline{u}\delta_{kl}+
\nabla_{kl}\underline{u}-3\beta \delta_{lk})\gamma^{jl}
+\frac{2N}{uw}\gamma^{ik}\nabla_l d\nabla_k d\gamma^{jl}\}P\big)\right) \\ &
 \geq W_k^{1/k}(\lambda_0,\lambda_0, \ldots, \lambda_0+N\mu_0).
\end{split}
\end{align*} 
Since 
\begin{equation*}
W_k^{1/k}(\lambda_0,\lambda_0, \ldots, \lambda_0+N\mu_0)
\rightarrow +\infty \quad \textrm{as} \quad N\rightarrow+\infty,
\end{equation*}
it follows that, for $t$ small enough such that $Ct\leq \varepsilon$ and $N$ sufficient large, we have
\begin{equation*}
G^{ij}\nabla_{ij}v\leq -1-\varepsilon G^{ij}\delta_{ij}.
\end{equation*}
Finally, choosing $\delta$ even smaller, such that $\delta N<t,$ we get 
$v\geq 0$ on $\partial \Omega_\delta.$ 
\end{proof}

Now, consider the function $Av+B\varrho^2$ where $A$ and $B$ are large constants to be determined as follow: First we choose $B>0$
large enough to ensure $Av+B\varrho^2\geq \pm\nabla_\alpha (u-\overline{u})$ on $\partial \Omega_\delta$, for any
$1\leq \alpha\leq n-1$.
On the other hand, it follows from \eqref{Lestimate1} and \eqref{Lestimate2} that we can choose $A\gg B\gg 1$ suficiently large such that
\begin{equation}
L\big(Av+B\varrho^2\pm\nabla_\alpha(u-\overline{u})\big)\leq 0 \quad \textrm{in }\, \Omega_\delta
\end{equation}
Thus, by the Maximum Principle, we have $Av+B\varrho^2\geq \pm\nabla_\alpha(u-\overline{u})$ in $\Omega_\delta$.
Since $(Av+B\varrho^2)(x_0)=\nabla_\alpha(u-\overline{u})(x_0)=0$, we get
\begin{equation}
-\nabla_n(Av+B\varrho^2)(x_0)\leq\nabla_\alpha(u-\overline{u})(x_0)\leq \nabla_n(Av+B\varrho^2)(x_0)
\end{equation}
which give us the mixed second derivatives boundary  estimate
\begin{equation}
\label{Mixedbounds}
|\nabla_{nk} u|\leq C
\end{equation}
for any $1\leq k\leq n-1$. 

Now, we consider the pure normal second derivative bound. First, we prove the uniform lower bound
\begin{equation}
\label{DoubleNormal}
M=\min_{x\in\partial\Omega} \min_{\xi\in T_x(\partial\Omega), |\xi|=1} (u+\nabla_{\xi\xi}u)\geq c_0
\end{equation}
for some uniform $c_0>0$, where $T_x(\partial\Omega)$ denotes the tangent space of $\partial\Omega$ at $x$.
Suppose $M$ is achieved at $x_0\in\partial\Omega$ with $\xi\in T_{x_0}(\partial\Omega)$. Consider a local orthonormal frame field
$e_1, \ldots, e_n$ around $x_0$, chosen as above,  and such that $e_1(x_0)=\xi$. Thus
\begin{align}
\label{DoubleNormal1}
\begin{split}
M &=  u(x_0)+\nabla_{11}u(x_0) \\ & = \underline{u}(x_0)+\nabla_{11}\underline{u}(x_0)-\nabla_{n}(u-\underline{u})(x_0)\Pi_{11}(x_0)
\end{split}
\end{align}
where we have used \eqref{U-2-bordo} in the second equality. We can assume that
\begin{align}
\label{DoubleNormal2}
\nabla_{n}(u-\underline{u})(x_0)\Pi_{11}(x_0) >\frac{1}{2}\big(\underline{u}(x_0)+\nabla_{11}\underline{u}(x_0)\big)
\end{align} 
for otherwise we are done because of the strictly local convexity of the radial graph $M^\prime$ of 
the function $1/\underline{u}$. So,
\begin{align*}
\Pi_{11}(x)\geq \frac{1}{2}\Pi_{11}(x_0)> \frac{\underline{u}+\nabla_{11}\underline{u}}{4\nabla_{n}(u-\underline{u})}(x_0)\geq c_1>0
\quad \textrm{in }\, \Omega_\delta
\end{align*}
for uniform $c_1>0$ and $\delta>0$ sufficiently small. Hence, the function
\begin{align*}
\mu=\frac{\underline u+\nabla_{11}\underline u-M}{\Pi_{11}}
\end{align*}
is well defined in $\Omega_\delta$. Now we observe that
\begin{align*}
\underline{u}+\nabla_{11}\underline{u}-\nabla_n(u-\underline{u})\Pi_{11}=u+\nabla_{11} u\geq M.
\end{align*}
Thus, the function 
\begin{align}
\Phi(x)=\mu(x)-\nabla_n(u-\underline{u})(x), \quad x\in\Omega_\delta
\end{align}
satisfies $\Phi \geq 0$ on $\partial\Omega_\delta\cap\partial\Omega$. A direct computation shows that
\begin{align*}
L(\Phi)\leq  L(\nabla_n u)+C\big(1+\sum G^{ii}\big).
\end{align*}
Proceeding as in \eqref{contaLalpha} we get
\begin{align*}
 L(\nabla_n u) & =  - G_u \nabla_n u-g^{in}\sigma\big( \nabla \psi(N), \frac{1}{uw}e_i-\frac{\nabla_i u}{u^2w}x\big)
 \\& \quad  -G^{in}\nabla_i u +\nabla_n u\sum G^{ii}.
\end{align*}
Therefore,
\begin{align}
L(\Phi)\leq  C\big(1+\sum G^{ii}\big)
\end{align}
for a uniform positive constant $C$.  Thus, applying Lemma \ref{DEF-V}, we can proceed as above and find positive
constants $A\gg B\gg 1$ so that
\begin{align}
L\big(Av+B\varrho^2+\Phi \big)\leq 0 \quad \textrm{in }\, \Omega_\delta
\end{align}
and
\begin{align}
Av+B\varrho^2+\Phi \geq 0\quad\textrm{on }\,\partial\Omega_\delta.
\end{align}
Hence, the Maximum Principle and equality $(Av+B\varrho^2+\Phi)(x_0)=0$ imply that
\begin{align*}
\nabla_n\big(Av+B\varrho^2+\Phi\big)(x_0)\geq 0
\end{align*}
and we have the uniform upper bound $\nabla_{nn} u(x_0)\leq C$. Since $u$ is admissible, it follows from the previous estimates
the uniform bound $|\nabla^2 u (x_0)|\leq C$. Then, the principal curvatures of $M$ at 
$X(x_0)$ also have an upper bound. To obtain a uniform
positive lower bound we use again the Newton-Maclaurin inequality  \eqref{NewtonMacinequality} to obtain
\begin{align}
\label{Lowerbound}
S_{n}(\kappa_1^{-1}, \ldots,\kappa_n^{-1})\geq c_0>0
\end{align}
for some uniform constant $c_1>0$. So, as each $\kappa_i$ is bounded from above at $X(x_0)$ by a uniform constant,
\eqref{Lowerbound} gives a uniform positive lower bound for each $\kappa_i$ at $X(x_0)$. Therefore, \eqref{DoubleNormal}
is established. Thus, for every $x\in\partial\Omega$ the eigenvalues of $\{u\delta_{\alpha\beta}+
\nabla_{\alpha\beta}u\}(x)_{\alpha,\beta \leq n-1}$ have a uniform positive lower bound, which finally implies 
a uniform upper bound for $u+\nabla_{nn} u$.
So, \eqref{Cdoisbordo} is established. Now, a uniform positive lower bound for the principal curvatures of $M$ 
follows from \eqref{Lowerbound}.

Then, we have the following result.
\begin{proposition}
\label{proposition}
Let $u\geq \underline{u}$ be an admissible solution of \eqref{equation4}. Then, 
\begin{equation}
\label{S3-2}
\| u\|_{C^{2}(\bar{\Omega})}\leq C \quad \textrm{and} \quad C^{-1}\leq \kappa_i\leq C
\end{equation}
where $C$ is a positive constant depending on $\Omega, \inf_{\bar\Omega}\underline u, \|\underline{u}\|_{C^2(\bar\Omega)},
\|\psi\|_{C^{1,1}(\mathbb{S}^n)},$ the convexity of $M^\prime$ and other known data.
\end{proposition}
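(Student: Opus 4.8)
The plan is to assemble the estimate \eqref{S3-2} from the pointwise bounds derived above, proceeding in the order $C^0 \to C^1 \to$ interior $C^2 \to$ boundary $C^2$, each step feeding the next. I would begin with the $C^0$ bound: an admissible $u \geq \underline u$ corresponds to a convex hypersurface $M \subset B$ with $\partial M = \Sigma$, so its radial graph is trapped between $\Sigma$ and the complementary piece of $O$, and \eqref{Czerobounds} then gives $0 < c \leq u \leq L$ with $c, L$ depending only on $\Sigma$ and the ball $B_\delta(p_1)$. For the gradient bound, on $\partial\Omega$ the tangential derivatives of $u$ are prescribed by $\varphi$ while the normal derivative is squeezed by the comparison $\underline u \leq u \leq \overline u$ with the linear super-solution $\overline u$ of \eqref{eqsupersolution}, which gives \eqref{Cumbordo}; in the interior I would maximize $w = (u^2 + |\nabla u|^2)^{1/2}$ and note that at an interior maximum the equation $(u\sigma_{ij} + \nabla_{ij}u)\sigma^{jk}\nabla_k u = 0$ together with positivity of $u\sigma + \nabla^2 u$ forces $\nabla u = 0$, hence $|\nabla u| \leq L$ everywhere, i.e. \eqref{Cuminterior}.

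For the interior second-order estimate I would pass to the support function $u(x) = \langle x, \eta^{-1}(x)\rangle$ --- legitimate since $M$ is strictly convex and $\eta$ is a diffeomorphism onto its image --- and rewrite the equation as $\tilde F(\nabla^2 u + u\sigma) = \tilde\psi$ with $\tilde F = (S_{n,n-k})^{1/k}$ as in \eqref{Equationsupport}. Maximizing the trace $H = \Delta u + nu$ at an interior point, differentiating the equation twice, and using the concavity of $\tilde F$, the inequality $\sum \tilde F^{ii} \geq 1$, and the negative semidefiniteness of $\{H_{ij}\}$ at the maximum leads to $\Delta\tilde\psi - n\tilde\psi + H \leq 0$ there, hence $H \leq C$; with the $C^0$ bound this bounds the eigenvalues of $\nabla^2 u + u\sigma$, i.e. the $\kappa_i^{-1}$, from above. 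The Newton--Maclaurin inequality \eqref{NewtonMacinequality} upgrades this to $S_n(\kappa_1^{-1}, \ldots, \kappa_n^{-1}) \geq c_0\tilde\psi^n$, which together with the upper bound on the $\kappa_i^{-1}$ yields a positive lower bound on each $\kappa_i$. Thus, once the boundary bound \eqref{Cdoisbordo} is available and the trace maximum is interior, we obtain $C^{-1} \leq \kappa_i \leq C$ and $|\nabla^2 u| \leq C$ in $\Omega$.

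The heart of the proof, and the step I expect to be the main obstacle, is the boundary second-order estimate \eqref{Cdoisbordo}. The tangential-tangential derivatives are immediate from differentiating $u = \underline u$ along $\partial\Omega$, which gives \eqref{U-2-bordo} and hence \eqref{EST-TANG}. For the mixed derivatives I would work with the linearized operator $L$ of \eqref{Loperator}: differentiating \eqref{equation4} and using the structural bounds \eqref{EstimateforG} one obtains, as in \eqref{Lestimate1}, that $|L(\nabla_\alpha(u - \overline u))| \leq C(1 + \sum G^{ii})$, so by Lemma \ref{DEF-V} the function $Av + B\varrho^2 \pm \nabla_\alpha(u - \overline u)$ is an $L$-supersolution in $\Omega_\delta$ and nonnegative on $\partial\Omega_\delta$ for suitable $A \gg B \gg 1$; the maximum principle and vanishing at $x_0$ then yield \eqref{Mixedbounds}. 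The double normal derivative is the delicate point: I would first establish the uniform positive lower bound \eqref{DoubleNormal} for $u + \nabla_{\xi\xi}u$ over unit tangent directions. Assuming the minimum is attained at $x_0$ with $\xi = e_1$, either $\nabla_n(u - \underline u)(x_0)\Pi_{11}(x_0)$ is small, when the bound is immediate from the strict convexity of $M'$, or it is comparable to $\underline u + \nabla_{11}\underline u$, when one builds the auxiliary function $\Phi = \mu - \nabla_n(u - \underline u)$ with $\mu$ formed from $\Pi_{11}$ as in the text, verifies $\Phi \geq 0$ on $\partial\Omega_\delta \cap \partial\Omega$ and $L\Phi \leq C(1 + \sum G^{ii})$, and applies the barrier of Lemma \ref{DEF-V} once more to bound $\nabla_{nn}u(x_0)$ from above; then all $\kappa_i$ at $X(x_0)$ are bounded above, and Newton--Maclaurin \eqref{Lowerbound} gives them a uniform positive lower bound, proving \eqref{DoubleNormal}. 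Finally, the positive lower bound on the tangential eigenvalues of $u\delta_{\alpha\beta} + \nabla_{\alpha\beta}u$ along $\partial\Omega$ forces the remaining upper bound on $u + \nabla_{nn}u$, so \eqref{Cdoisbordo} holds; combining the four parts gives \eqref{S3-2} with $C$ depending on the stated data. The underlying difficulty is that $\psi$ carries no concavity or ellipticity structure beyond positivity and the restriction of $W_k$ to the positive cone, so every boundary estimate must be driven by the subsolution $\underline u$ and the convexity of $M'$ through the barrier of Lemma \ref{DEF-V} rather than by any structure condition on the right-hand side.
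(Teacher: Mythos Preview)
Your proposal is correct and follows essentially the same route as the paper: the $C^0$ and $C^1$ bounds via convexity and the barriers $\underline u,\overline u$, the interior $C^2$ estimate through the support-function reformulation \eqref{Equationsupport} and the trace maximum argument, and the boundary $C^2$ estimate via \eqref{U-2-bordo}, the linearized operator $L$ with the barrier of Lemma~\ref{DEF-V} for the mixed derivatives, and the auxiliary function $\Phi$ together with Newton--Maclaurin for the double normal. The only discrepancy is cosmetic (the paper also oscillates between $u-\underline u$ and $u-\overline u$ in the mixed-derivative barrier, which is immaterial since both have controlled $C^2$ norm), and your remark that the interior Hessian bound is contingent on \eqref{Cdoisbordo} matches the paper's logical structure.
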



\section{Proof of Theorem \ref{teorema}}
\label{section4}

In order to proof  Theorem \ref{teorema} we follow the same approach used in \cite{Cruz}, applying 
the method of continuity and a degree theory argument
with the aid of the {\it a priori} estimates we have alredy established.

First, we need to express \eqref{equation4} in a different form. Setting  $v=-\ln \rho=\ln u,$
the matrix  $A[u]=[a_{ij}]$ can be written in terms of $v$ by
\begin{equation}
\label{g-v}
a_{ij}=\frac{e^v}{w}\big(\sigma_{ij}+\gamma^{ik}\nabla_{kl}v\gamma^{jl}\big)
\end{equation}
where
\begin{equation}
\label{h-v}
w=\sqrt{1+|\nabla v|^2} \quad \textrm{and}\quad \gamma^{ij}=\sigma_{ij}-\frac{\nabla_i v\nabla_j v}{w(1+w)}.
\end{equation}
Consider, for each fixed $t\in [ 0,1],$ the functions $\Theta^t$ and $\Xi^t$ defined in 
$D=\mathbb{R}^{n}\times \mathbb{R}\times\Omega$ by
\begin{equation}
\label{psi-t}
\Theta^t(p,z,x)=e^{2(z-\underline v(x))} \big(t\Psi(p,z,x)+(1-t)\underline{\psi}(x)\big)
\end{equation}
and
\begin{equation}
\label{phi-t}
\Xi^t(p,z,x)= \big(t+(1-t)e^{2(z-\underline{v}(x))}\big)\Psi(p,z,x)
\end{equation}
where $\underline v=-\ln\bar\rho=\ln \underline{u}$ is the subsolution 
and $\underline\psi$ is given in \eqref{subsolution2}.

Choosing $\Upsilon=\Theta^t$ in the generic form \eqref{equation4} of equation \eqref{equationDP}, then
\eqref{equation4} takes the form
\begin{align}
\begin{split}
\label{equation5}
H(\nabla^2 v,\nabla v, v)&=e^{2(v-\underline v)} \big(t\Psi(\nabla v, v, x)+(1-t)
\underline{\psi}(x)\big)\quad\textrm{in }\, \Omega\\
v&=\ln \varphi \quad\textrm{on }\, \partial\Omega.
\end{split}
\end{align}
Notice that $\underline v=-\ln\bar\rho=\ln \underline{u}$
is a strictly subsolution of \eqref{equation5} for $t>0$ and it is a solution for $t=0.$
Moreover, as 
\begin{align*}
\begin{split}
H_v-\Theta^t_v = \frac{\partial}{\partial v}\big( H-\Theta^t\big)
 =F^{ij} a_{ij} -2\Theta^t=-\Theta^t\leq 0
\end{split}
\end{align*}
we can apply the comparison principle to equation \eqref{equation5} 
and conclude that any solution $v^t$ for $t>0$ satisfy $v^t> \underline{v}$ in $\Omega$. 
Hence, Proposition \ref{proposition} can be applied and we get the $C^2$ estimates for any solution $v^t$ of \eqref{equation5}.
Therefore, the H\"older estimates follows from the Evans-Krylov Theorem and 
we can apply the continuity method to conclude that a unique solution $v^0$ of
\eqref{equation5} for $t=1$ exists. Now, we consider the family of equations ($s\in [0,1]$)
\begin{align}
\begin{split}
\label{equation6}
H(\nabla^2 v,\nabla v, v)&=\Xi^s(\nabla v,v,x)\quad\textrm{in }\, \Omega\\ v&=\ln\phi \quad\textrm{on }\, \partial\Omega.
\end{split}
\end{align}
From Proposition \ref{proposition}, the Evans-Krylov Theorem and by the standard regularity theory for
 second order uniformly elliptic equations we get the higher order estimate 
\begin{align}
\label{c4forvt}
\| v^s\|_{C^{4,\alpha} (\bar\Omega)}<  C\quad\textrm {independent of } s,
\end{align}
for any solution $v^s$ of equation \eqref{equation6} satisfying $v^s\geq \underline v.$ We also point out that, if $s>0$ and
$v^s\geq \underline{v}$ is a solution of \eqref{equation6} then $v^s$ is a supersolution of \eqref{equation5} for
$t=s$. Thus, we have the strictly inequality $v^s>\underline v$ for $s>0$.

Let $C_0^{4,\alpha}(\bar\Omega)$ be the subspace of $C^{4,\alpha}(\bar\Omega)$ consisting of functions vanishing on
the boundary. Consider the cone
\begin{align*}
\mathcal{O}  = \{ & z\in C_0^{4,\alpha}(\bar\Omega)  :  z>0 \textrm{ in } \Omega, \, 
 \nabla_n z >0   \textrm{ on }  \partial\Omega, \\ & z+\underline{v} \textrm{ is admissible} \textrm{ and }
 \|z\|_{C^{4,\alpha}(\bar\Omega)}\leq  C+\|\underline{v}\|_{C^{4,\alpha}(\bar\Omega)}\}, 
\end{align*}
where $C$ is the constant given in \eqref{c4forvt}. Now, we construct a map from $\mathcal{O}\times [0,1]$ to
$C^{2,\alpha}(\bar\Omega)$ given by
\begin{equation*}
M_s[z]=H(\nabla^2(z+\underline{v}), \nabla (z+\underline{v}), z+\underline{v})-
\Xi^s(\nabla(z+\underline{v}), z+\underline{v}, x),\quad z\in\mathcal{O},
\end{equation*}
where $\Xi^s$ is the function given in \eqref{phi-t} and hence
\begin{equation*}
\Xi^s(\nabla(z+\underline{v}), z+\underline{v}, x)= \big(s+(1-s)e^{2z}\big)\Psi(\nabla(z+\underline{v}), z+\underline{v}, x).
\end{equation*} 
Clearly, $z$ is a solution of $M_s[z]=0$ iff $v^s=z+\underline{v}$ is a solution of \eqref{equation6}.
In particular,  $z^0=v^0-\underline{v}$ is the unique solution of $M_0[z]=0$ and $z^0\in\mathcal{O}.$
Moreover, there is no solution of $M_s[z]=0$ on $\partial\mathcal{O}$ for any $s>0$. Therefore, the degree of $M_s$ on 
$\mathcal{O}$ at $0$, $\deg(M_s,\mathcal{O},0)$, is well defined and independent of $s.$ For more details, we refer the
reader to \cite{LI1} and \cite{LI2}.

Now we compute  $\deg(M_0,\mathcal{O},0).$ As $\Theta^1=\Xi^0$, we know that  $M_0[z]=0$ has a unique solution $z^0$ in $\mathcal{O}.$ The Fr\'echet derivative of $M_0$ at $z^0$ is a linear elliptic operator from $C^{4,\alpha}_0(\bar\Omega)$ to
$C^{2,\alpha}(\bar\Omega)$ given by
\begin{align*}
M_{0,z^0}(h)=H^{ij}|_{v^0}\nabla_{ij}h+H^i|_{v^0}\nabla_i h+(H_{v}|_{v^0}-\Xi^0_{v}|_{v^0})h
\end{align*}
where $v^0=z^0+\underline v$. Since $\big(H_{v}-\Xi^0_v\big)|_{v^0}\leq 0$, we have that
$M_{0,z^0}$ is invertible. By the theory in \cite{LI1}, we can see that
\begin{align*}
\deg(M_0,\mathcal{O},0)=\deg(M_{0, z^0},B_1,0)=\pm 1\neq 0,
\end{align*}
where $B_1$ is the unit ball of $C_0^{4,\alpha}(\bar\Omega).$ Therefore,
\begin{align*}
\deg(M_s,\mathcal{O},0)\neq 0 \quad \textrm{for all}\,  s\in [0,1].
\end{align*}
Then, equation $M_s[z]=0$ has at least one solution for any $s\in[0,1].$ In particular,
the function $v^1=z^1+\underline{v}$ is then a solution of \eqref{equation6}. Therefore
$u= e^{v^1}$ is a solution of \eqref{equation}.


\end{document}